\newtheorem{theorem}{Theorem}
\theoremstyle{definition}
\newtheorem{example}[theorem]{Example}
\theoremstyle{remark}
\newtheorem{remark}{Remark}
\numberwithin{equation}{section}
\begin{document}
\title{THE NON-ITERATES ARE DENSE IN THE SPACE OF CONTINUOUS SELF-MAPS}

\author{B.V. RAJARAMA BHAT}
\address{Indian Statistical Institute, Stat-Math. Unit, R V College Post, Bengaluru 560059, India}
\email{bhat@isibang.ac.in}
\thanks{The first author is supported by  J
	C Bose Fellowship of the Science and Engineering Board, India.}

\author{CHAITANYA GOPALAKRISHNA}
\address{Indian Statistical Institute, Stat-Math. Unit, R V College Post, Bengaluru 560059, India}
\email{cberbalaje@gmail.com, chaitanya\_vs@isibang.ac.in}

\thanks{The second author is
	supported 
	by the National Board for Higher Mathematics, India through No:
	0204/3/2021/R\&D-II/7389.}

\subjclass[2020]{Primary  39B12; Secondary 37B02.}

\date{August 08, 2022.}


\keywords{Iterative  root, piecewise affine linear, dense.}

\begin{abstract}
	In this paper we develop a tool to identify functions which have no iterative roots of any order. Using this, we prove that when $X$ is $[0,1]^m$,  $\mathbb{R}^m$ or $S^1$, every non-empty open set of the space $\mathcal{C}(X)$ of continuous self-maps on $X$ endowed with the compact-open topology contains a map that does not have even discontinuous iterative roots of order $n\ge 2$.
This, in particular, proves that  the complement of $\{f^n: f\in \mathcal{C}(X)~\text{and}~n\ge 2\}$, the set of non-iterates, is dense in $\mathcal{C}(X)$ for these $X$. 
\end{abstract}

\maketitle


\section{Introduction}

Given a self-map $f$ on a non-empty set $X$
and an integer $n\ge 2$, an {\it iterative root of order $n$} 
of
$f$ is a map $g: X\to X$ such that
\begin{eqnarray}\label{it-root}
	g^n(x)=f(x), \quad \forall x\in X,
\end{eqnarray}
where for each non-negative integer $k$, $g^k$ denote the $k$-th order  iterate of $g$ defined by $g^0={\rm id}$,
the identity map on $X$, and $g^{k}=g\circ g^{k-1}$ for $k\ge 1$. For $n=2$,
a solution of \eqref{it-root} is called an {\it iterative square root}
of $f$. Let $\mathcal{C}(X)$ denote the set of continuous self-maps on a locally compact Hausdorff space  $X$ endowed with the compact-open topology and $\mathcal{W}(X):=\cup_{n=2}^{\infty}\mathcal{W}(n;X)$, where $\mathcal{W}(n;X):=\{f^n:f\in \mathcal{C}(X)\}$ for all $n\ge 2$.  
Since the iterative root problem \eqref{it-root} is of great interest, being weaker versions of the {\it embedding flow problem} (\cite{fort1955}) and the {\it invariant curve problem} (\cite{kuczma1990}), it is interesting to ask how big the set $\mathcal{W}(X)$ is in $\mathcal{C}(X)$. However,  most of the results seen in the literature are only for $X=[0,1]$, the unit interval in $\mathbb{R}$.


Humke and Laczkovich proved in
\cite{Humke-Laczkovich1989,Humke-Laczkovich} that $\mathcal{W}(2;[0,1])$ contains no open balls of $\mathcal{C}([0,1])$ (i.e., its complement $(\mathcal{W}(2;[0,1]))^c$ is
dense in $\mathcal{C}([0,1])$),  $\mathcal{W}(2;[0,1])$ is not dense in $\mathcal{C}([0,1])$, and
$\mathcal{W}(n;[0,1])$ is an analytic non-Borel subset of $\mathcal{C}([0,1])$ for $n\ge 2$.
Simon proved in \cite{simon1989,simon1990,simon1991a,simon1991b} that $\mathcal{W}([0,1])$ is of first category and of zero Wiener measure, $\mathcal{W}(2;[0,1])$ is nowhere
dense in $\mathcal{C}([0,1])$, and
$\mathcal{W}([0,1])$ is not  dense in $\mathcal{C}([0,1])$.
Later, nowhere denseness of $\mathcal{W}([0,1])$ in ${\mathcal
	C}([0,1])$ was proved by Blokh \cite{Blokh} using a method
different from that of Humke-Laczkovich and Simon.  

The authors showed in \cite[Theorem 2.4]{BG} that if a self-map $f$ on a non-empty $X$ has a distinct point $x_0$ that is not a fixed point of $f$ and such that the cardinality of $f^{-2}(x_0)$ is large compared to that of $f^{-1}(x)$ for all $ x\neq x_0$, then $f$ has no iterative square roots on $X$. These results, in particular, facilitated them to construct functions in $\mathcal{C}(X)$ without iterative square roots in abundance when $X$ is $[0,1]^m$, the {\it unit cube} $\{(x_1, x_2, \ldots, x_m): 0\le x_j\le 1~\text{for}~1\le j\le m\}$ in the  Euclidean space $\mathbb{R}^m$, and $\mathbb{R}^m$ in their usual topologies (see Theorems 3.8 and 4.3 in \cite{BG}). In this paper, we 
prove stronger versions of
these results to show that the same approach can be used to obtain functions with no iterative roots of any order $n\ge 2$ (see Theorem \ref{Tool}). 
We also generalize the Case (i) of Theorem 2.4 in \cite{BG} to include all cases of finite cardinality (see {\bf (C1)} of Theorem \ref{Tool}).
Furthermore, as an application of Theorem \ref{Tool}, we prove the following result, which is much stronger than Theorems 3.8 and 4.3 in \cite{BG} and includes an additional case of $X=S^1$, the {\it unit circle} $\{z\in \mathbb{C}:|z|=1\}$ 
in the complex plane $\mathbb{C}$ in its usual topology. 

%
\begin{theorem} \label{NoRoot}
	Every non-empty open set of $\mathcal{C}(X)$ contains a map that does not have even discontinuous iterative roots of order $n\geq 2$  in the following cases: {\bf (i)} $X=[0,1]^m$ for $m\in \mathbb{N}$; {\bf (ii)} $X=\mathbb{R}^m$ for $m\in \mathbb{N}$; {\bf (iii)} $X=S^1$. In particular  $(\mathcal{W}(X))^c$ is dense in $\mathcal{C}(X)$ in these cases.
\end{theorem}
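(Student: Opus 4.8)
The plan is to exploit the main tool, Theorem \ref{Tool}, which provides a combinatorial criterion on the point-preimage cardinalities that guarantees a self-map has no iterative roots of any order $n\ge 2$. The strategy for each of the three spaces is the same: take an arbitrary non-empty open set $\mathcal{U}\subseteq\mathcal{C}(X)$, fix any $f\in\mathcal{U}$, and then perturb $f$ slightly—within the compact-open topology—to a nearby map $\tilde f\in\mathcal{U}$ that satisfies the preimage-cardinality hypothesis of Theorem \ref{Tool}. Since the compact-open topology on $\mathcal{C}(X)$ for these locally compact $X$ is controlled (on compacta) by uniform approximation, the first step is to recall that it suffices to produce $\tilde f$ uniformly close to $f$ on the relevant compact sets. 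The cleanest way to guarantee membership in $\mathcal{U}$ while retaining full control of the local branching structure is to arrange $\tilde f$ to be \emph{piecewise affine linear} (as the keywords advertise), so that $\tilde f$ agrees with $f$ up to a small error yet has exactly prescribed finite preimages at a chosen target point.

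Concretely, I would first treat the model case $X=[0,1]^m$, which should essentially upgrade the construction behind Theorem 3.8 of \cite{BG}. Given $f$ and $\varepsilon>0$, I would pick a point $x_0$ in the interior that is not a fixed value in a degenerate way, and build a small ``tent''-type modification supported on an $\varepsilon$-neighborhood so that $\tilde f$ has the feature required by {\bf (C1)} of Theorem \ref{Tool}: a non-fixed point $x_0$ whose second preimage $\tilde f^{-2}(x_0)$ is large in cardinality compared with $\tilde f^{-1}(x)$ for every $x\ne x_0$. Making the bump piecewise affine linear keeps all preimages finite and explicitly countable, so the cardinality inequality can be verified by direct inspection rather than by any delicate estimate. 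For $X=\mathbb{R}^m$ the argument is the same locally, the only extra bookkeeping being that the compact-open topology requires the approximation only on a chosen exhausting compact set, leaving $\tilde f=f$ outside it, which is automatic since the modification is compactly supported.

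The genuinely new case is {\bf (iii)} $X=S^1$, and I expect the main obstacle to lie there. The difficulty is topological: on the circle a local tent-construction must be made to close up consistently, and more importantly the degree of $\tilde f$ constrains the global preimage counts, so one cannot freely inflate $\tilde f^{-2}(x_0)$ at a single point without affecting the counts elsewhere around the circle. I would handle this by choosing the perturbation to be degree-respecting: realize $\tilde f$ as a piecewise affine linear circle map that coincides with $f$ outside a small arc and, inside that arc, oscillates rapidly enough to pile up many preimages of a chosen $x_0$ while keeping $\tilde f^{-1}(x)$ small for $x\ne x_0$. The verification then reduces to a finite combinatorial count of crossings of the graph with horizontal levels (lifted to $\mathbb{R}$), after which the hypotheses of Theorem \ref{Tool} apply verbatim.

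In all three cases the final step is identical and short: having produced $\tilde f\in\mathcal{U}$ satisfying the preimage hypothesis, Theorem \ref{Tool} immediately yields that $\tilde f$ has no (even discontinuous) iterative root of any order $n\ge 2$, so $\tilde f\in(\mathcal{W}(X))^c\cap\mathcal{U}$. Since $\mathcal{U}$ was an arbitrary non-empty open set, this both proves the open-set statement and shows $(\mathcal{W}(X))^c$ is dense. The crux of the whole argument is thus engineering the local perturbation so that it is simultaneously (a) small in the compact-open topology, (b) finite and explicitly countable in its preimages, and (c) compatible with the global topology of $X$—with the circle case being where (c) bites hardest.
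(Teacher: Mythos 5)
Your high-level strategy (perturb an arbitrary map inside the open set into one satisfying the hypotheses of Theorem \ref{Tool}, then conclude) is exactly the paper's, but the concrete mechanism you propose for producing such a map cannot work, and this is precisely where the content of the proof lies. For the circle you suggest a piecewise affine map that ``oscillates rapidly enough to pile up many preimages of a chosen $x_0$ while keeping $\tilde f^{-1}(x)$ small for $x\ne x_0$,'' i.e.\ you aim at case {\bf (C1)} with all fibers finite. This is impossible for \emph{continuous} maps on $S^1$ or $[0,1]$: oscillation piles up preimages of every nearby level, not just of $x_0$. Concretely, if $f$ is continuous and $f^{-1}(x_0)=\{y_1,\dots,y_M\}$ is finite, then around each $y_i$ the image of a small arc is a connected set containing $x_0$ and other points, hence contains a one-sided arc at $x_0$; by pigeonhole at least $M/2$ of the $y_i$ feed the same side, so some $x\ne x_0$ arbitrarily close to $x_0$ has $\#f^{-1}(x)\ge M/2$. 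Thus $\#f^{-1}(x)\le N$ for all $x\ne x_0$ forces $M\le 2N$, whence $\#f^{-2}(x_0)\le MN\le 2N^2\le N^3$ for $N\ge 2$ (and the case $N=1$ fails by a similar intermediate-value argument), so no continuous self-map of a one-dimensional space satisfies {\bf (C1)}. The same objection applies to your ``tent-type modification ... keeps all preimages finite'' plan for case {\bf (i)} when $m=1$. Also, the degree constraint you flag as the main obstacle on $S^1$ is a red herring; the real obstruction is this level-counting phenomenon, which your proposal does not overcome.

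What the paper actually does is aim at case {\bf (C2)}, where $f^{-1}(x_0)$ is allowed (indeed forced) to be \emph{infinite}: it approximates $h$ by an admissible piecewise affine linear map $f_0$ supported on a fine partition, injective and different from the identity on each arc, then picks a tiny arc $J$ lying in the interior of $R(f_0)$ and inside one partition arc, with $f_0(J)\cap J=\emptyset$, and a sub-arc $K\subseteq J^0$; it then redefines $f$ to equal $f_0$ off $J$ and to be \emph{constant} on $K$ with value $x_0$, an endpoint of $f_0(K)$ chosen so that $f_0(x_0)\ne x_0$. Then $f(x_0)=f_0(x_0)\ne x_0$, $f^{-2}(x_0)\supseteq f_0^{-1}(K)$ is infinite (because $K$ lies in the interior of $R(f_0)$ and $f=f_0$ on $f_0^{-1}(K)$ by the disjointness conditions), while every fiber over $z\ne x_0$ is finite since $f$ is injective on each arc of the refined partition other than $K$; so {\bf (C2)} applies. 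Collapsing an arc, not oscillating, is the only mechanism compatible with continuity here. Finally, note that for cases {\bf (i)} and {\bf (ii)} the paper does not redo any construction: it observes that the maps already built in the proofs of Theorems 3.8 and 4.3 of \cite{BG} satisfy hypotheses identical to those of Theorem \ref{Tool}, which now excludes roots of \emph{all} orders $n\ge 2$, not just square roots.
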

 In Section \ref{S4},  we provide simple examples to show the necessity of  various assumptions made in Theorem \ref{Tool}.
 One of the most important aspects of this theorem is the role of comparing the sizes of inverse images of a given function at different points in determining the absence of iterative roots. Indeed, assuming the choice of axiom, this comparison can be made using every infinite cardinal number $\aleph_\alpha$ indexed by the ordinal numbers $\alpha$ (see Theorem \ref{Tool-cardinal}).
 It is natural to ask if this comparison can be made using measure theory rather than just cardinality. Interestingly, this appears to be impossible (see Example \ref{Ex4}).



\section{A tool to detect functions without iterative roots} \label{S2}

Let $X$ be a non-empty set.  For each self-map $f$ on $X$, $A\subseteq X$ and $x\in X$,  let  $f^{-1}(x):=\{y\in X:
f(y)=x\}$, $f^{-2}(x):=\{y\in X: f^2(y)=x\}$, $R(f)$
denote the range of $f$, and
$\# A$ the cardinality of $A$. 
The following result is not  only useful in proving Theorem \ref{NoRoot}, but also appears to be of interest on its own.

\begin{theorem}\label{Tool}
	Let $f$ be a self-map on a non-empty set $X$ such that $f(x_0)\ne x_0$ for some $x_0\in X$.
	Then $f$ has no iterative roots of order $n\ge 2$ on $X$ in the following cases:
	\begin{enumerate}
		\item[\bf (C1)] \quad $\# f^{-2}(x_0)>N^3$ and $\# f^{-1}(x)\leq N$ for all $x\neq
		x_0$ and for some $N\in \mathbb{N}$;
		
		\item[\bf (C2)] \quad $ f^{-2}(x_0)$ is infinite and $ f^{-1}(x)$ is finite for
		all $x\ne x_0$;
		
		\item[\bf (C3)] \quad  $f^{-2}(x_0)$ is uncountable and $f^{-1}(x)$ is countable for all $x\ne
		x_0$.
	\end{enumerate}
\end{theorem}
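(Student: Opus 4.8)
The plan is to argue by contradiction: assume $g$ is an iterative root of order $n$, so $g^n=f$, and then derive an upper bound on $\#f^{-2}(x_0)$ incompatible with each of (C1)--(C3). Throughout write $g^{-k}(A)=\{y:g^k(y)\in A\}$. First I would record two elementary facts. Since $g^n(x_0)=f(x_0)\neq x_0$, the point $x_0$ cannot be fixed by $g$, i.e.\ $g(x_0)\neq x_0$. Next, because $g^k(z)=y$ forces $f(z)=g^{n}(z)=g^{n-k}(y)$, one has the containment
\[
g^{-k}(y)\subseteq f^{-1}\big(g^{n-k}(y)\big)\qquad(1\le k\le n,\ y\in X),
\]
and hence $\#g^{-k}(y)\le\#f^{-1}\big(g^{n-k}(y)\big)$. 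Call a set \emph{small} if it has at most $N$ elements in case (C1), is finite in case (C2), and is countable in case (C3); the hypotheses say exactly that $f^{-1}(x)$ is small for every $x\neq x_0$. The displayed inequality then shows that $g^{-k}(y)$ is small whenever $g^{n-k}(y)\neq x_0$.

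The main obstacle is that the naive estimate ``$g^{-1}(x_0)$ is small'' can fail: it would require $g^{n-1}(x_0)\neq x_0$, which breaks down precisely when $x_0$ is a periodic point of $g$. I would circumvent this with a short dichotomy that handles periodicity uniformly and without casework: at least one of $g^{n-1}(x_0)$ and $g^{n-2}(x_0)$ differs from $x_0$, for if both equalled $x_0$ then $g(x_0)=g\big(g^{n-2}(x_0)\big)=g^{n-1}(x_0)=x_0$, contradicting $g(x_0)\neq x_0$. So I fix $k\in\{1,2\}$ with $g^{n-k}(x_0)\neq x_0$; by the containment above, $g^{-k}(x_0)$ is small.

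Finally I would bound $f^{-2}(x_0)=g^{-2n}(x_0)$ by telescoping in two steps, using that a union of small-many small sets is small (a finite union of finite sets is finite, and a countable union of countable sets is countable). Grouping preimages by the value of $g^{\,n+1-k}$ gives the disjoint decomposition $g^{-(n+1)}(x_0)=\bigsqcup_{w\in g^{-k}(x_0)}g^{-(n+1-k)}(w)$; each $w\in g^{-k}(x_0)$ satisfies $g^{k-1}(w)\in g^{-1}(x_0)\setminus\{x_0\}$, so every block $g^{-(n+1-k)}(w)\subseteq f^{-1}\big(g^{k-1}(w)\big)$ is small, and since the index set $g^{-k}(x_0)$ is small it follows that $g^{-(n+1)}(x_0)$ is small. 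Repeating the device, $g^{-2n}(x_0)=\bigsqcup_{w\in g^{-(n+1)}(x_0)}g^{-(n-1)}(w)$, where now $g(w)\in f^{-1}(x_0)\setminus\{x_0\}$, so each block $g^{-(n-1)}(w)\subseteq f^{-1}\big(g(w)\big)$ is small; hence $g^{-2n}(x_0)$ is small. Tracking cardinalities, in case (C1) the two steps multiply the bounds to give $\#f^{-2}(x_0)\le N\cdot N\cdot N=N^3$, contradicting $\#f^{-2}(x_0)>N^3$; in case (C2) they force $f^{-2}(x_0)$ to be finite, and in case (C3) countable, contradicting the respective hypotheses. The crux, and the genuinely new point over a direct counting argument, is the dichotomy on $g^{n-1}(x_0)$ and $g^{n-2}(x_0)$, which is what lets the argument survive the periodic case.
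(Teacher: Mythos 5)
Your proof is correct, and it takes a genuinely different route from the paper's. The paper also assumes $f=g^n$ and works near $x_0$, but its contradiction is located at an auxiliary set: with $A_{-2}=f^{-2}(x_0)$, $E_0=f(A_{-2})$, $E_1=g(E_0)$, $E_2=g^{n-2}(E_1)$ and $E_{-1}=g^{-1}(E_0)$, it shows $E_{-1}$ is small from above (since $f(E_{-1})\subseteq E_2$, $f(E_1)=\{g(x_0)\}$, and $x_0\notin E_2$) and simultaneously large from below (since $A_{-2}\subseteq f^{-1}(E_0)$ forces $E_0$ to be large, and $E_0\subseteq R(g)$ then forces $\#E_{-1}\ge\#E_0$). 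You instead bound the hypothesis set itself: using the containment $g^{-k}(y)\subseteq f^{-1}\bigl(g^{n-k}(y)\bigr)$ and your dichotomy on $g^{n-1}(x_0)$, $g^{n-2}(x_0)$ to get around possible periodicity of $x_0$ under $g$, you telescope $f^{-2}(x_0)=g^{-2n}(x_0)$ through $g^{-(n+1)}(x_0)$ and conclude $\#f^{-2}(x_0)\le N^3$ (resp.\ finite, countable), contradicting the hypothesis directly. Structurally, your argument is purely preimage-counting and never invokes forward images or the range of $g$ (the paper's lower bound via $E_0\subseteq R(g)$ is replaced by your disjoint fiber decompositions), at the cost of needing the explicit dichotomy that the paper avoids, since its key facts $g(x_0)\ne x_0$ and $x_0\notin E_2$ fall out immediately from $f(x_0)\ne x_0$. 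Your route has a notable extra payoff: it is sharply quantitative, showing that the existence of any $n$-th root forces $\#f^{-2}(x_0)\le N^3$, which at once explains why the exponent $3$ in \textbf{(C1)} is optimal --- a point the paper must address separately by constructing examples (its Remark 4). Both proofs rest on the same set-theoretic facts (a finite union of finite sets is finite; a countable union of countable sets is countable), so neither is more demanding in foundations.
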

\begin{proof}
	Suppose, on the contrary,  that $f=g^n$ for some self-map $g$ on $X$ and $n\ge 2$. Consider the actions of $f$ and $g$ on various subsets of $X$ around $x_0$ given by the following diagram 
	\begin{center}
		\begin{tikzpicture}[scale=0.15]
			\tikzstyle{every node}+=[inner sep=0pt]
			\draw (17,-3.2) node {$E_{-1}$};
			\draw (17,-16.1) node {$E_0$};
			\draw (31.5,-16.1) node {$A_{-1}$};
			\draw (17,-28.4) node {$E_2$};
			\draw (31.5,-28.4) node {$\{x_0\}$};
			\draw (31.5,-3.2) node {$A_{-2}$};
			\draw (24.5,-16.1) node {$\subseteq$};
			\draw (45.3,-28.4) node {$\{y_0\}$};
			\draw (3.2,-28.4) node {$E_1$};
			\draw [black] (20,-28.4) -- (28.5,-28.4);
			\fill [black] (28.5,-28.4) -- (27.7,-27.9) -- (27.7,-28.9);
			\draw (24.25,-29.8) node [below] {$g$};
			\draw [dashed] (31.5,-25.4) -- (31.5,-19.1);
			\fill [black] (31.5,-19.1) -- (31,-19.9) -- (32,-19.9);
			\draw (32,-22.25) node [right] {$f^{-1}$};
			\draw [dashed] (31.5,-13.1) -- (31.5,-6.2);
			\fill [black] (31.5,-6.2) -- (31,-7) -- (32,-7);
			\draw (32,-9.65) node [right] {$f^{-1}$};
			\draw [dashed] (17,-13.1) -- (17,-6.2);
			\fill [black] (17,-6.2) -- (16.5,-7) -- (17.5,-7);
			\draw (16.5,-9.65) node [left] {$g^{-1}$};
			\draw [black] (29.26,-5.19) -- (19.24,-14.11);
			\fill [black] (19.24,-14.11) -- (20.17,-13.95) -- (19.51,-13.2);
			\draw (23.24,-9.16) node [above] {$f$};
			\draw [black] (34.5,-28.4) -- (42.3,-28.4);
			\fill [black] (42.3,-28.4) -- (41.5,-27.9) -- (41.5,-28.9);
			\draw (38.4,-29.8) node [below] {$g$};
			\draw [black] (6.2,-28.4) -- (14,-28.4);
			\fill [black] (14,-28.4) -- (13.2,-27.9) -- (13.2,-28.9);
			\draw (10.1,-28.9) node [below] {$g^{n-2}$};
			\draw [black] (14.76,-18.1) -- (5.44,-26.4);
			\fill [black] (5.44,-26.4) -- (6.37,-26.24) -- (5.7,-25.5);
			\draw (9.31,-21.76) node [above] {$g$};
			\draw [black] (19.29,-18.04) -- (29.21,-26.46);
			\fill [black] (29.21,-26.46) -- (28.93,-25.56) -- (28.28,-26.32);
			\draw (23.24,-22.74) node [below] {$f$};
		\end{tikzpicture}
	\end{center}
	where $y_0:=g(x_0)$, $A_{-1}:=f^{-1}(x_0)$, $A_{-2}:=f^{-2}(x_0)$,
	$E_0:=f(A_{-2})$,
	$E_{-1}:=g^{-1}(E_0)$,  $E_1:=g(E_0)$ and $E_2:=g^{n-2}(E_1)$.   Since $A_{-2}\ne \emptyset$ by hypothesis, clearly $E_1\ne \emptyset$.  Also, since $f=g^n$, we have
	\begin{align}
		f(E_1)&=g^2(g^{n-2}(E_1))=g^2(E_2)=g(g(E_2))=g(\{x_0\})=\{y_0\},  \label{3} \\
		f(E_{-1})&=g^{n-1}( g(E_{-1}))\subseteq g^{n-1}(E_0)=g^{n-2}(g(E_0))=g^{n-2}(E_1)=E_2. \label{2'} 
	\end{align}
	Further, since $f(x_0)\ne x_0$, we have $y_0\ne x_0$,  $x_0\notin A_{-1}$ and 
	$x_0\notin E_2$.

	\noindent {\bf (C1)} \quad Since $y_0\ne x_0$, by \eqref{3} we have $\#E_1\le N$. Therefore, as $x_0\notin E_2$ and $\#E_2\le N$, by  \eqref{2'} we have
	$\#E_{-1}\le \#(f^{-1}(E_2))\le N\cdot N=N^2$.
	
	On the other hand,
	since $x_0\notin A_{-1}$ and  $E_0\subseteq A_{-1}$, we have $\#f^{-1}(x)\le N$ for all $x\in E_0$, implying that $\#E_0>N^2$, because $\#A_{-2}>N^3$ and $A_{-2}\subseteq f^{-1}(E_0)$. 
	Therefore, as $E_0\subseteq R(f)\subseteq R(g)$, it follows that $\#E_{-1}>N^2$.
	This contradicts an earlier conclusion.
	Hence $f$ has no iterative roots of order $n\ge2$ on $X$.

	\noindent {\bf (C2)} \quad  Since $y_0\ne x_0$, by \eqref{3} we have $E_1$ is finite.  Therefore, as $x_0\notin E_2$ and $E_2$ is finite, by  \eqref{2'} it follows that
	$E_{-1}$ is finite.

	On the other hand, 
	since $x_0\notin A_{-1}$ and  $E_0\subseteq A_{-1}$, we have $f^{-1}(x)$ is finite for all $x\in E_0$, implying that $E_0$ is infinite, because  $A_{-2}$ is infinite and $A_{-2}\subseteq f^{-1}(E_0)$.  Therefore, as $E_0\subseteq R(f)\subseteq R(g)$, we see that $E_{-1}$ is infinite.
	This contradicts the conclusion of the previous paragraph.
	Hence $f$ has no iterative roots of order $n\ge 2$ on  $X$.
	
	\noindent {\bf (C3)} \quad The proof of  {\bf (C2)}
	is based on
	the  fact that a
	finite union of finite sets is finite. The proof of this case is similar, using the result that a countable union of
	countable sets is countable.
\end{proof}

\begin{example}
	{\rm	Consider the maps $f_1, f_2:[0,1]\to [0,1]$
		defined by 
		\begin{eqnarray*}
			f_1(x)=\left\{\begin{array}{cll}
				3x&\text{if}&0\le x\le \frac{1}{4},\\
				\frac{3}{4}&\text{if}&\frac{1}{4}\le x\le \frac{1}{2},\\
				-3x+\frac{9}{4}&\text{if}&\frac{1}{2}\le x\le \frac{3}{4},\\
				x-\frac{3}{4}	&\text{if}&\frac{3}{4}\le x\le 1
			\end{array}\right. \quad \text{and} \quad 			f_2(x)=\left\{\begin{array}{cll}
				4x&\text{if}&0\le x\le \frac{1}{4},\\
				1-x	&\text{if}&\frac{1}{4}<x\le \frac{1}{2},\\
				\frac{1}{4}&\text{if}&\frac{1}{2}< x< \frac{3}{4},\\
				2-2x	&\text{if}&\frac{3}{4}\le x\le 1
			\end{array}\right.
		\end{eqnarray*}
		(see Figure \ref{Fig1}). Then $f_1$ is continuous and do not have even discontinuous iterative roots of order $n\ge 2$ on $[0,1]$ by {\bf (C2)} of Theorem \ref{Tool}. For the same reason, the discontinuous map $f_2$ also do not have even discontinuous iterative roots of order $n\ge 2$ on $[0,1]$. }
	\begin{figure}[h]
		\begin{center}
		\begin{tikzpicture}[scale=0.9][domain=0:10]
			\draw [thick] (0,0) -- (0,5);
			\draw [thick] (0,5) -- (5,5);
			\draw [thick] (5,5) -- (5,0);
			\draw [thick] (5,0) -- (0,0);
			
			
			
			
			
			
			\draw [thick] (3.75,0)--(5,1.25);
			\draw [thick] (2.5,3.75)--(3.75,0);
			\draw [thick] (1.25,3.75)--(2.5, 3.75);
			\draw [thick] (0,0)--(1.25, 3.75);
			
			
			\draw [thick] (0,0) 
			node[below]{{$0$}} (0,0);
			
			\draw [thick] (1.25,0) 
			node[below]{{$\frac{1}{4}$}} (1.25,0);
			
			\draw [thick] (0,1.25) 
			node[left]{{$\frac{1}{4}$}} (0,1.25);
			
			\draw [thick] (2.5,0) 
			node[below]{{$\frac{1}{2}$}} (2.5,0);
			
			\draw [thick] (3.75,0) 
			node[below]{{$\frac{3}{4}$}} (3.75,0);
			
			\draw [thick] (0,3.75) 
			node[left]{{$\frac{3}{4}$}} (0, 3.75);
			
			\draw [thick] (5,0) 
			node[below]{{$1$}} (5,0);
			
			\draw [thick] (0,5) 
			node[left]{{$1$}} (0,5);
		\end{tikzpicture}
		\hspace{1cm}
		\begin{tikzpicture}[scale=0.9][domain=0:10]
			\draw [thick] (0,0) -- (0,5);
			\draw [thick] (0,5) -- (5,5);
			\draw [thick] (5,5) -- (5,0);
			\draw [thick] (5,0) -- (0,0);
			
			\node at (1.25,4.98) {\textbullet};
			
			\node at (2.5,2.5) {\textbullet};
			
			\node at (3.75,2.5) {\textbullet};
			
			
			
			\node at (2.5,1.25) {$\circ$};
			
			\node at (3.75,1.25) {$\circ$};
			
			\node at (1.25,3.75) {$\circ$};
			
			\draw [thick] (3.77,2.5)--(5,0);
			\draw [thick] (2.55,1.25)--(3.69,1.25);
			\draw [thick] (1.28,3.72)--(2.5, 2.53);
			\draw [thick] (0,0)--(1.25, 5);
			
			
			\draw [thick] (0,0) 
			node[below]{{$0$}} (0,0);
			
			\draw [thick] (1.25,0) 
			node[below]{{$\frac{1}{4}$}} (1.25,0);
			
			\draw [thick] (0,1.25) 
			node[left]{{$\frac{1}{4}$}} (0,1.25);
			
			\draw [thick] (2.5,0) 
			node[below]{{$\frac{1}{2}$}} (2.5,0);
			
			\draw [thick] (0,2.5) 
			node[left]{{$\frac{1}{2}$}} (0,2.5);
			
			\draw [thick] (3.75,0) 
			node[below]{{$\frac{3}{4}$}} (3.75,0);
			
			\draw [thick] (0,3.75) 
			node[left]{{$\frac{3}{4}$}} (0, 3.75);
			
			\draw [thick] (5,0) 
			node[below]{{$1$}} (5,0);
			
			\draw [thick] (0,5) 
			node[left]{{$1$}} (0,5);
		\end{tikzpicture}
		\caption{Maps $f_1$ and $f_2$}
		\label{Fig1}
	\end{center}
	\end{figure}
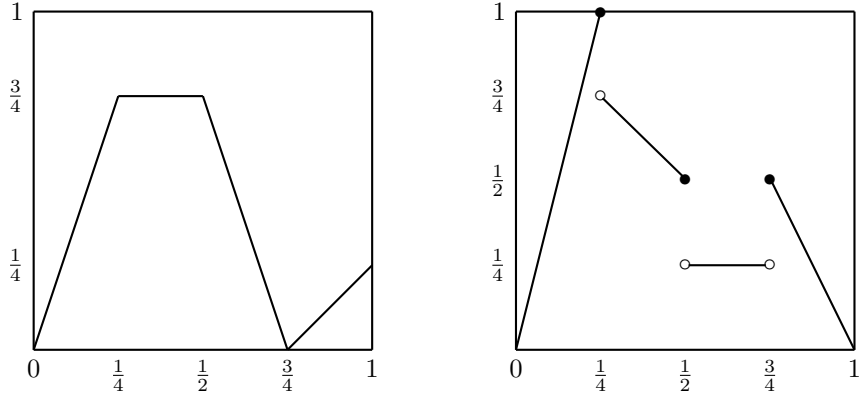
\end{example}

We now
apply this tool to continuous self-maps on the spaces $[0,1]^m$, $\mathbb{R}^m$ and $S^1$ to prove Theorem 1.

\noindent {\it Proof 
	of Theorem \ref{NoRoot}
	for $[0,1]^m$ and $\mathbb{R}^m$.}
As seen in \cite{BG}, both $\mathcal{C}([0,1]^m)$ and $\mathcal{C}(\mathbb{R}^m)$ are metrizable. Further, 
the technique used to prove Theorem 3.8 (resp. Theorem 4.3) in \cite{BG}, which says that each open ball of $\mathcal{C}([0,1]^m)$ (resp. $\mathcal{C}(\mathbb{R}^m)$) has a map that does not have even discontinuous iterative square roots, is to exhibit a continuous self-map on $[0,1]^m$ (resp. $\mathbb{R}^m$) in each open ball of $\mathcal{C}([0,1]^m)$ (resp. $\mathcal{C}(\mathbb{R}^m)$) satisfying the hypotheses of one of the cases of Theorem 2.4 in \cite{BG}. Hence the results for $[0,1]^m$ and $\mathbb{R}^m$ follows from Theorem \ref{Tool}, because the hypotheses of Theorem 2.4 of \cite{BG} and Theorem \ref{Tool} coincide. \hfill $\square$

In order to prove Theorem \ref{NoRoot} for  $S^1$, it is convenient to
have some definitions and notation.
Given $z_0, z_1, \ldots, z_{k-1} \in S^1$, where $k\ge 2$ is fixed,
we define the {\it cyclic order} $\prec$ on $S^1$ by $z_0\prec z_1\prec \cdots \prec z_{k-1}$ if there exist
$t_1, t_2, \ldots, t_{k-1}\in \mathbb{R}$ such that $0< t_1<t_2<\cdots<t_{k-1}<1$
and $z_j=z_0e^{2\pi it_j}$ for $1\le j\le k-1$. 
For any two distinct points $z_1, z_2 \in S^1$, with $z_1=e^{2\pi it_1}, z_2=e^{2\pi it_2}$ and  $0\leq t_1<t_2<t_1+1<2$, define
the  {\it open arc} $(z_1,z_2)$ and the {\it closed arc} $[z_1,z_2]$
by $(z_1,z_2):=\{z\in S^1: z_1\prec z\prec z_2\}:=
\{ e^{2\pi it}: t_1<t<t_2\}$ and $[z_1,z_2]:=(z_1,z_2)\cup \{z_1,z_2\}$.
On the other hand, let $(z_2,z_1):=\{e^{2\pi it}\in S^1: t_2< t<
1+t_1\}$ and $[z_2, z_1]:=(z_2, z_1)\cup \{z_2, z_1\}$. Additionally, if $|z_2-z_1|<2$, then only one of the arcs $[z_1,z_2]$ and $[z_2,z_1]$ is a minor (smaller) arc of $S^1$ as none of them are semicircles.

By a {\it partition} $P$ of $S^1$ we mean a finite set of points $z_0, z_1,\ldots, z_{k-1} \in S^1$ for some $k\ge 2$
such that $0\prec z_0\prec z_1\prec \cdots \prec z_{k-1}$  and
\begin{eqnarray*}
	S^1=\bigcup_{j=0}^{k-1}[z_j , z_{j+1}],
\end{eqnarray*}
where $z_k=z_0$.
In this case, we refer to $J_j:=[z_j, z_{j+1}]$ as the $j$-th arc of the partition $P$ for $0\leq j\leq k-1$.  Let $J(P)$ denote the
collection $\{J_j: 0\leq j\leq k-1\}$ of arcs of $P$ and $J_j^0$ 
the interior $(z_j, z_{j+1})$ of $J_j$ for $0\leq j\leq k-1$. We say that a partition $Q$ of $S^1$ a {\it refinement} of $P$ if $Q\supseteq P$.

Let $J=[z_1, z_2]$ be an arc in $S^1$, where $z_j=e^{2\pi it_j}$ for
$j\in \{1,2\}$ such that $0\le t_1<t_2<t_1+1<2$, and $w_j=e^{2\pi
	is_j} \in S^1$ for  $j\in \{1,2\}$, where $0\le s_1, s_2<1$.  Then
we can define an {\it affine linear map} $f:J\to S^1$ such that
$f(z_j)=w_j$ for all $j\in \{1,2\}$ by
\begin{eqnarray}\label{affine-map}
	f(e^{2\pi i\{\alpha t_{2} +(1-\alpha)t_{1}\}})=
	e^{2\pi i\{\alpha s_{2} +(1-\alpha)s_{1}\}},~\forall \alpha\in [0,1]
\end{eqnarray}
or
\begin{eqnarray}\label{affine-map2}
	f(e^{2\pi i\{\alpha t_{2} +(1-\alpha)t_{1}\}})=e^{2\pi i\{\alpha (s_{2}-1) +(1-\alpha)s_{1}\}},~\forall \alpha\in [0,1]
\end{eqnarray}
according as $f$ maps the arc $[z_1, z_2]$ onto $[w_1,w_2]$ or
$[w_2, w_1]$, respectively. Each affine linear map on $J$ is clearly
continuous and open.  A self-map $f$ on $S^1$ is said to be {\it
	piecewise affine linear} if there exists a partition $P:z_0\prec
z_1\prec \cdots \prec z_{k-1}$ of $S^1$ such that the restriction of
$f$ to the arc $J_j$ is  affine linear for $0\le j\le k-1$. In this
case we say that $f$ is supported by the partition $P$. 
A piecewise affine linear self-map $f$ on $S^1$ supported on a partition $P:z_0\prec
z_1 \prec \cdots \prec z_{k-1}$ with values $f(z_j)=w_j$ for
$0\leq j\leq k-1$, where $|w_{j+1}-w_j|<2$ for all $0\leq j\leq k-1$ with $w_k=w_0$, 
is said to be {\it admissible} if $f$ maps the arc $[z_j,
z_{j+1}]$ to the minor arc among $[w_j, w_{j+1}]$ and
$[w_{j+1}, w_j]$ for all $0\leq j\leq k-1$.  Each admissible piecewise affine linear map on $S^1$ is clearly
continuous. 

\noindent {\it Proof of Theorem \ref{NoRoot} for $S^1$.}
Consider $\mathcal{C}(S^1)$ in its canonical metric
$\rho(f,g)=\sup\{|f(z)-g(z)|:z\in S^1\}$, the uniform metric, and
let  $B_\epsilon(h):=\{f\in \mathcal{C}(S^1): \rho(f,h)<\epsilon\}$
be any ball in $\mathcal{C}(S^1)$ of radius $\epsilon\in (0,1)$
centered at $h$. 

Since $h$ is uniformly continuous on $S^1$, there exists  a $\delta$ with $0<\delta <1/4$ such that
\begin{eqnarray}\label{1}
	|h(z)-h(z')| <\frac{\epsilon }{10} 
\end{eqnarray}
whenever $z,z'\in S^1$ with $|z-z'|<\delta$. Consider a
partition $P:z_0\prec z_1\prec \cdots \prec z_{k-1}$ of $S^1$ for some $k\ge 2$ with  $|z_{j+1}-z_{j}|<\delta/2$
for $0\le j\le k-1$,
where $z_j=z_0e^{2\pi it_j}$  for $0\le j\le k-1$ such that
$0\le t_0<t_1< t_2<\cdots <t_{k-1}<1$. Let $J_j:=[z_{j}, z_{j+1}]$  for $0\le j\le k-1$.  

Our first step is to choose an admissible affine linear map
$f_0$ in $B_\epsilon(h)$ supported on $P$ satisfying that {\bf (i)} $f_0|_{J_j}$ is injective for $0\le j\le k-1$; and {\bf (ii)} $f_0|_{J_j}\ne {\rm id}$ for $0\le j\le k-1$.
We can easily choose distinct
points $w_j:=e^{2\pi is_j}$ in $S^1$, where $0\le  s_0, s_1, \ldots,
s_{k-1}<1$, such that $w_j\ne z_j$ for $0\leq j\leq k-1$ and
\begin{eqnarray}\label{2}
	|h(z_j)-w_j|<\frac{\epsilon }{20}
\end{eqnarray}
for $0\le j\le k-1$. Then, by \eqref{1} and \eqref{2}   we have
\begin{eqnarray}
	|w_{j+1}-w_j|&\le &  |w_{j+1}-h(z_{j+1})|+|h(z_{j+1})-h(z_j)|+|h(z_j)-w_j|\nonumber \\
	&<& \frac{\epsilon}{20}+\frac{\epsilon }{10}+\frac{\epsilon}{20}=\frac{\epsilon}{5}, \label{5}
\end{eqnarray}
implying that only one of the arcs $[w_j,w_{j+1}]$ and $[w_{j+1},w_j]$ is a minor arc of $S^1$ for  $0\le j\le k-1$, where $w_k=w_0$. Let
$f_0$ be the admissible piecewise affine linear map on $S^1$ supported on $P$
with values $f_0(z_j)=w_j$ for $0\le j\le k-1$.  Then, clearly $f_0$ is a continuous self-map on $S^1$.
Also,
since $w_{j+1}\ne w_{j}$ and $w_j \ne z_j$ for  $0\le j \le k-1$, we see that $f_0$ satisfies {\bf (i)} and {\bf (ii)}.  
Further, if $z\in J_j$, where $0\le j\le k-1$, then by using \eqref{1}, \eqref{2}, \eqref{5} and the definition of $f_0$, we have
\begin{eqnarray}
	|f_0(z)-h(z)|&\le & |f_0(z)-w_j|+|w_j-h(z_j)|+|h(z_j)-h(z)| \nonumber \\
	&< &\frac{\epsilon }{5}+\frac{\epsilon}{20}+\frac{\epsilon }{10} <\frac{\epsilon}{2}, \label{4}
\end{eqnarray}
proving that $f_0\in B_\epsilon(h)$.

Next, we choose an arc $J:=[u_0,u_1]$ in $S^1$ so small that {\bf (a)} $J$ is contained in the interior of $R(f_0)$; {\bf (b)} $J\subseteq J_r^0$
for some $0\le r\le k-1$; and {\bf (c)} $f_0(J)\subseteq J_{r'}^0$ for some
$0\le r'\le k-1$.  This is possible by virtue of {\bf (i)} and {\bf (ii)}, because $R(f_0)$  has non-empty interior by {\bf (i)}. Now consider $f_0$ on the arc $J$. Since $f_0|_J\ne {\rm id}$ by {\bf (ii)}, we can choose an $a\in J$ such that $f_0(a)\ne a$. This allows us to replace $J$ with a smaller arc around $a$, which we again denote by $J$, satisfying  that {\bf (d)} $f_0(J)\cap J=\emptyset$ and thus {\bf (e)} $f_0^{-1}(J)\cap J=\emptyset$, in addition to {\bf (a)}, {\bf (b)} and {\bf (c)}. Now choose an arc $K:=[y_0,y_1]$  contained in $J^0$ such that $y_0\ne y_1$. Since $K\subseteq J_r^0$ by {\bf (a)},  and $f_0$ is affine linear and non-constant on $J_r$, we have $f_0(K)=[x_0, x_1]$ for some $x_0\ne x_1$ in $J_{r'}^0$.
 Further, since $[x_0,x_1]\subseteq J_{r'}^0$ by {\bf (c)}, and $f_0$ is affine linear and non-identity on $J_{r'}$, we have either $f_0(x_0)\neq x_0$ or $f_0(x_1)\neq x_1$. 
Without loss of generality,  we assume that $f_0(x_0)\neq x_0$.

Now consider the refinement  $Q:z_0\prec \cdots \prec z_r\prec u_0
\prec y_0 \prec y_1 \prec u_1\prec z_{r+1}\prec \cdots \prec z_{k-1}$ of $P$ and let
$f$ be the admissible piecewise affine linear map on $S^1$  supported on
$Q$ with values $f(z_j)= f_0(z_j)= w_j$ for  $0\le j\le k-1$, $f(u_0)=f_0(u_0)$, $f(u_1)=f_0(u_1)$ and $f(y_0)=f(y_1)=x_0$. Then {\bf (f)} $f(z)=f_0(z)$ for all $z\in S^1\setminus J$, and $f$ has the
constant value $x_0$ on $K$,  so that  $f_0$ is modified only on $J$. 
Further, given any $z\in J$,
by {\bf (b)} and \eqref{5} we have
\begin{eqnarray*}
	|f(z)-f_0(z)|
	< |w_{r+1}-w_r|
	<  \frac{\epsilon}{5},
\end{eqnarray*}
implying by \eqref{4} that
\begin{eqnarray*}
	|f(z)-h(z)|\le |f(z)-f_0(z)|+|f_0(z)-h(z)|
	< \frac{\epsilon}{5}+\frac{\epsilon}{2}<\frac{4\epsilon}{5}.
\end{eqnarray*}
Therefore $f\in B_\epsilon(h)$
because we already know by {\bf (f)} and \eqref{4} that $|f(z)-h(z)|=|f_0(z)-h(z)|<\epsilon/2$ for all $z\in S^1\setminus J$.

Since $y_0\in K\subseteq J$,  by {\bf (d)} and {\bf (f)} we see that $f(x_0)=f_0(x_0)\ne x_0$. Also, since $f=f_0$ on $f_0^{-1}(K)$ by {\bf (e)} and {\bf (f)}, we have $f^{-2}(x_0)
\supseteq f^{-1}(K)
\supseteq f_0^{-1}(K)$, implying that $f^{-2}(x_0)$ is infinite, because  $K$ is contained in the interior of $R(f_0)$ by {\bf (a)}. Further, $f|_I$ is injective for all arcs $I\in J(Q)$ with $I\ne K$ so that $f^{-1}(z)$ is finite for all $z\ne x_0$ in $S^1$.
Thus $f$ satisfies all the conditions of {\bf (C2)} of Theorem \ref{Tool}, proving that it does not have even discontinuous iterative roots of order $n\ge 2$ on $S^1$.  \hfill $\square$

\section{Some Remarks about the Tool}\label{S4}
We conclude the paper with some additional comments about our tool Theorem \ref{Tool}.

\begin{remark}
	$f^{-2}$ considered in Theorem \ref{Tool} cannot be replaced by $f^{-1}$, as seen from Example 2.5 in \cite{BG}.
\end{remark} 

\begin{remark} 
	Consider the set $X=\{x_j:-8\le j\le 0\}$
	and let $f$ be the constant map on $X$ defined by $f(x_j)=x_0$ for all $-8\le j\le 0$.
	 Then $f$ satisfies the conditions in {\bf (C1)} of Theorem \ref{Tool} with $N=2$. However, $f(x_0)=x_0$ and $f$ has an iterative square root $g$ on $X$ given by 
\begin{eqnarray*}
	g(x_j)=\left\{\begin{array}{cll}
		x_{j+4}&\text{if}&-8\le j\le -5,\\
		x_0&\text{if}& -4\le j\le 0
	\end{array}\right.
\end{eqnarray*}
(see Figure \ref{Fig2}). Therefore the assumption that $f(x_0)\ne x_0$ made in Theorem \ref{Tool} cannot be dropped.
\begin{figure}[h]
	\centering
	\includegraphics[width=6cm,height=4.5cm]{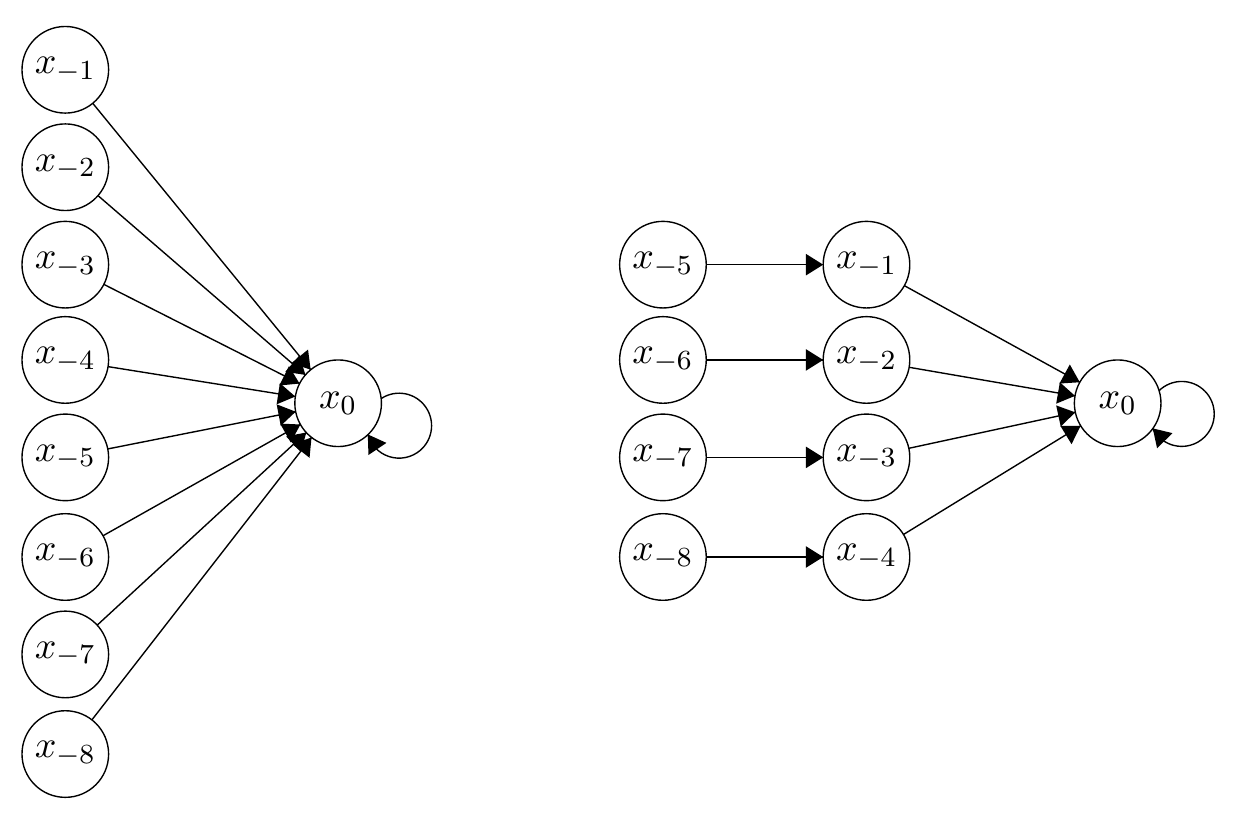}
	\caption{Maps $f$ and $g$}
	\label{Fig2}
\end{figure}
\end{remark}
\begin{remark}
	Let $X=\{x_j:-16\le x\le 2\}$ and  $f:X\to X$ be
  the map defined by 
\begin{eqnarray*}
	f(x_j)=\left\{\begin{array}{clc}
		x_2&\text{if}&-16\le j\le -9,\\
		x_0&\text{if}&-8\le j\le -1,\\
		x_1&\text{if}&j=0,\\
		x_2&\text{if}&j=1,\\
		x_0&\text{if}&j=2.
	\end{array}\right.
\end{eqnarray*}
Then $f(x_0)\ne x_0$ and $\#f^{-2}(x_0)>2^3$. However, $\#f^{-1}(x_2)>2$ and $f$ has an iterative square root $g$ on $X$ given by 
\begin{eqnarray*}
	g(x_j)=\left\{\begin{array}{clc}
		x_0&\text{if}&-16\le j\le -9,\\
		x_{j-8}&\text{if}&-8\le j\le -1,\\
		x_2&\text{if}&j=0,\\
		x_0&\text{if}&j=1,\\
		x_1&\text{if}&j=2
	\end{array}\right.
\end{eqnarray*}
(see Figure \ref{Fig3}). Therefore the condition that $\#f^{-1}(x)\le N$ for all $x\ne x_0$ considered in {\bf (C1)} of Theorem \ref{Tool} cannot be relaxed.  Similarly, the condition that $f^{-1}(x)$ is finite (resp.  countable) for all $x\ne x_0$ considered in {\bf (C2)} (resp. {\bf (C3)}) of Theorem \ref{Tool} cannot be weakened.
\begin{figure}[h]
	\centering
	\includegraphics[width=9cm,height=4.5cm]{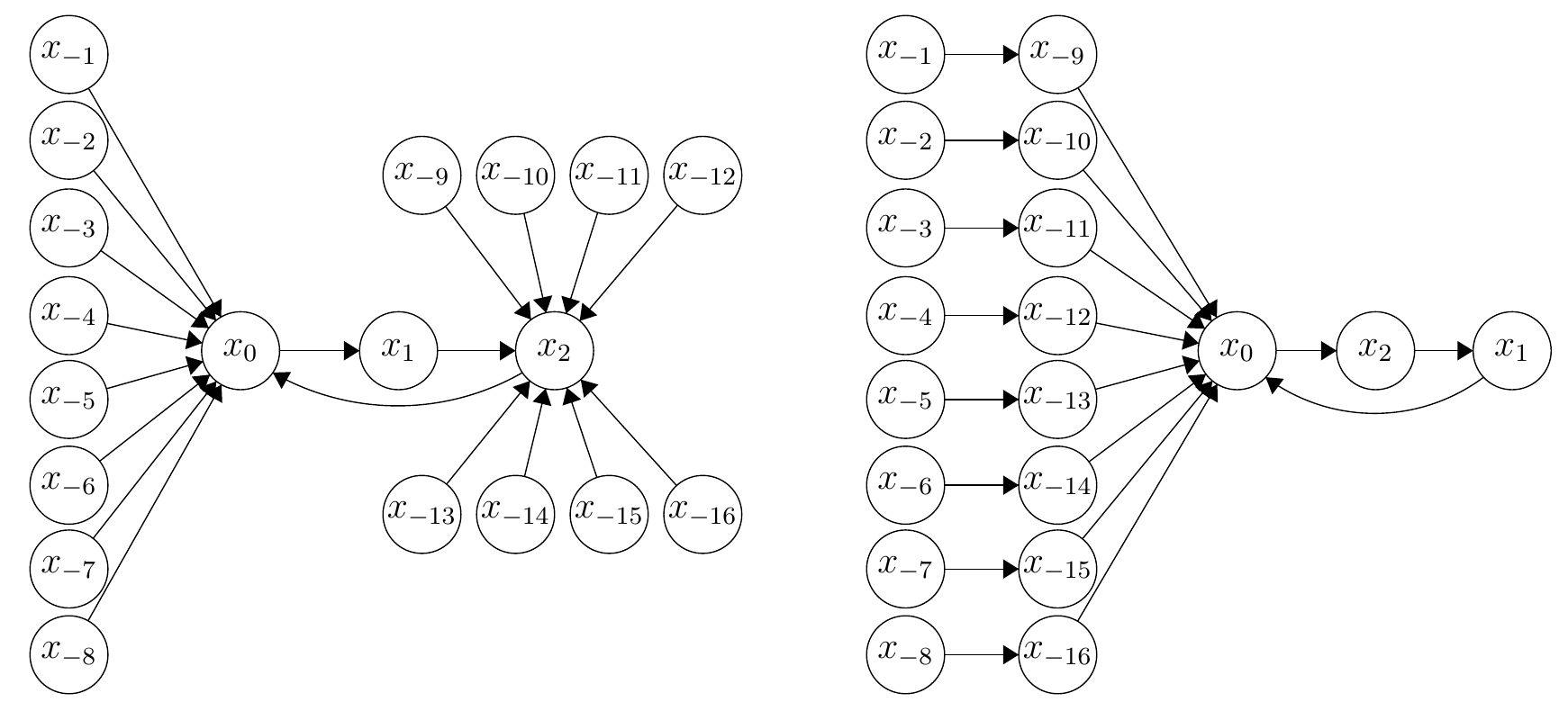}
	\caption{Maps $f$ and $g$}
	\label{Fig3}
\end{figure}
\end{remark}
\begin{remark}
The bound $N^3$ considered in {\bf (C1)}
of Theorem \ref{Tool} is optimal as seen from the following examples. In fact, if $N=1$, then a $3$-cycle $f$ on $X=\{x_0, x_1, x_2\}$ satisfying that $f(x_0)\ne x_0$,   $\#f^{-2}(x_0)=1$ and $\#f^{-1}(x_j)=1$ for all $j\ne 0$ has an iterative square root. If $N=2$, take $X=\{x_j: j=-12, -11, \ldots\}\cup \{y_j: j=-6,-5,\ldots\}$ and consider the map $f:X\to X$ defined by
\begin{eqnarray*}
	f(x_j)=\left\{\begin{array}{cll}
		x_{-4}&\text{if}&j=-12, -11,\\
		x_{-3}&\text{if}&j=-10, -9,\\
		x_{-2}&\text{if}&j=-8, -7,\\
		x_{-1}&\text{if}&j=-6, -5,\\
		x_{0}&\text{if}&j=-4,-3,-2,-1,\\
		x_{j+1}&\text{if}& j\ge 0
	\end{array}\right.
\end{eqnarray*}
and
	\begin{eqnarray*}
	f(y_j)=\left\{\begin{array}{cll}
		y_{-2}&\text{if}&j=-6, -5,\\
		y_{-1}&\text{if}&j=-4, -3,\\
		y_{0}&\text{if}&j=-2, -1,\\
		y_{j+1}&\text{if}&j\ge 0
	\end{array}\right.
\end{eqnarray*}
(see Figure \ref{Fig4}).  Then $f(x_0)\ne x_0$, $\#f^{-2}(x_0)=8$ and $\#f^{-1}(x)\le 2$ for all $x\ne x_0$. However, $f$ has an iterative square root $g$ on $X$ given by
\begin{eqnarray*}
	g(x_j)=\left\{\begin{array}{cll}
		y_{-6}&\text{if}&j=-12, -11,\\
		y_{-5}&\text{if}&j=-10, -9,\\
		y_{-4}&\text{if}&j=-8, -7,\\
		y_{-3}&\text{if}&j=-6, -5,\\
		y_{-2}&\text{if}&j=-4,-3,\\
		y_{-1}&\text{if}&j=-2,-1,\\
		y_j&\text{if}& j\ge 0
	\end{array}\right.
\end{eqnarray*}
and
	\begin{eqnarray*}
	g(y_j)=\left\{\begin{array}{cll}
		x_{j+2}&\text{if}&j=-6,-5,-4,-3\\
		x_0&\text{if}&j=-2,-1,\\
		x_{j+1}&\text{if}&j\ge 0
	\end{array}\right.
\end{eqnarray*}
(see Figure \ref{Fig5}). Similar examples can be given when $N>2$.  
\begin{figure}[h]
	\centering
	\includegraphics[width=9cm,height=4.5cm]{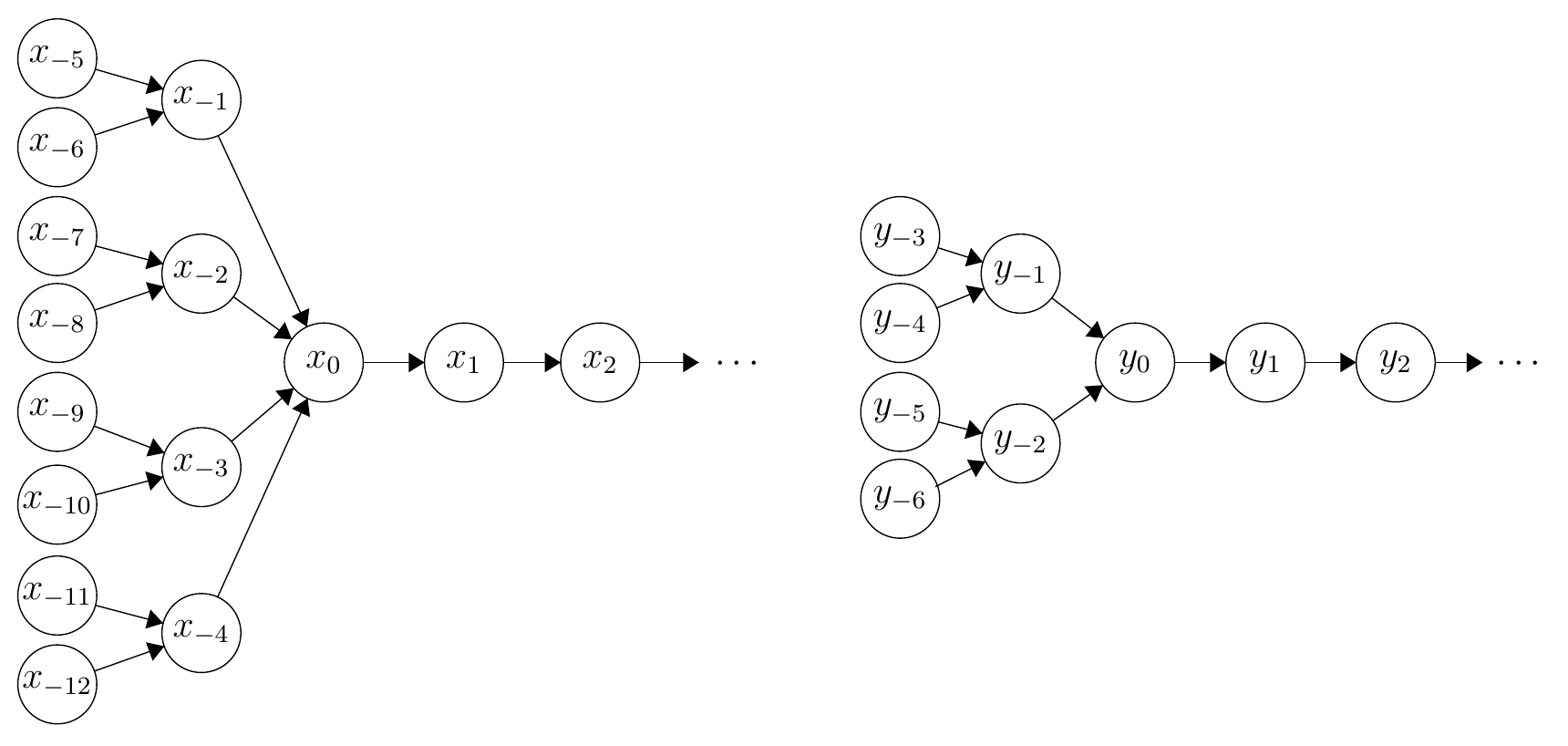}
	\caption{The map $f$}
	\label{Fig4}
\end{figure}
\begin{figure}[h]
	\centering
	\includegraphics[width=7.5cm,height=4.5cm]{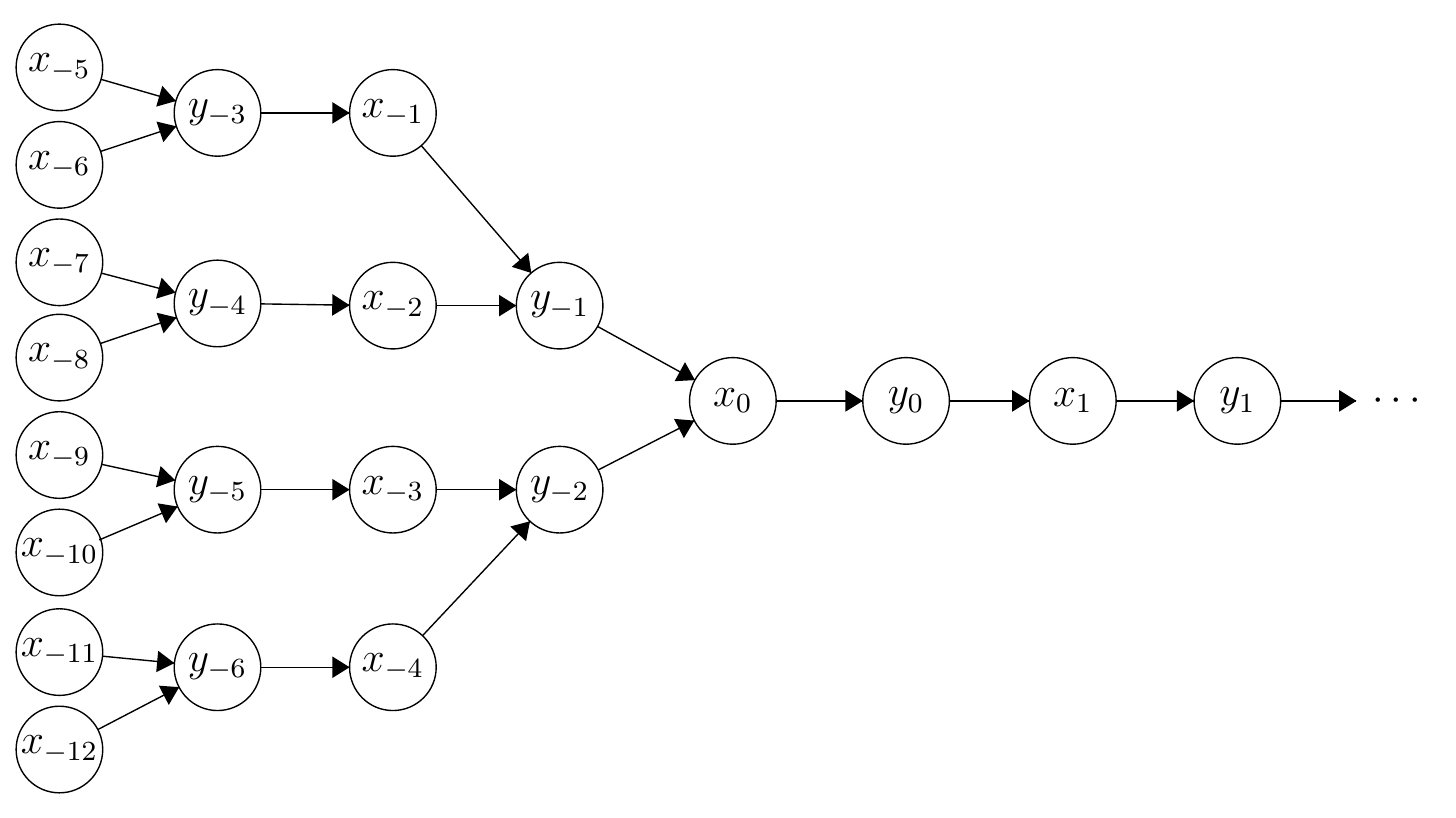}
	\caption{The map $g$}
	\label{Fig5}
\end{figure}
\end{remark}
\begin{remark}\label{R5}
 By comparing the three cases, one would expect the natural analogue of Theorem \ref{Tool} to be valid  in other contexts where  $f^{-2}(x_0)$ is ``big'' and  $f^{-1}(x)$ is ``small'' for all $x\ne x_0$. It is indeed true in the context of cardinal numbers under the assumption of the axiom of choice.
More precisely, we have the following result, where $\aleph_\alpha$'s are precisely the infinite cardinal numbers indexed by the ordinal numbers $\alpha$ and $\le$ is the order among the cardinal numbers $\aleph_\alpha$.
\end{remark}
\begin{theorem}\label{Tool-cardinal}
		Let $f$ be a self-map on a non-empty set $X$ such that $f(x_0)\ne x_0$ for some $x_0\in X$, $\#f^{-2}(x_0)\ge \aleph_\alpha$ and $\#f^{-1}(x)<\aleph_\alpha$ for all $x\ne x_0$ in $X$.
	Then $f$ has no iterative roots of order $n\ge 2$ on $X$.
\end{theorem}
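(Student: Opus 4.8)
The plan is to mimic exactly the diagram-chasing argument used to prove cases \textbf{(C1)}--\textbf{(C3)} of Theorem \ref{Tool}, replacing the dichotomy ``finite/infinite'' (or ``countable/uncountable'') with the dichotomy ``cardinality $<\aleph_\alpha$'' versus ``cardinality $\ge\aleph_\alpha$''. Accordingly, I would begin by supposing for contradiction that $f=g^n$ for some self-map $g$ on $X$ and some $n\ge 2$, and introduce exactly the same auxiliary sets as before: $y_0:=g(x_0)$, $A_{-1}:=f^{-1}(x_0)$, $A_{-2}:=f^{-2}(x_0)$, $E_0:=f(A_{-2})$, $E_{-1}:=g^{-1}(E_0)$, $E_1:=g(E_0)$, and $E_2:=g^{n-2}(E_1)$. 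The relations $f(E_1)=\{y_0\}$ (equation \eqref{3}) and $f(E_{-1})\subseteq E_2$ (equation \eqref{2'}) hold verbatim, as does the observation that $y_0\ne x_0$, $x_0\notin A_{-1}$, and $x_0\notin E_2$; these use only that $f(x_0)\ne x_0$ and are independent of any cardinality assumption.

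From here I would run the two-sided size estimate. On one side, since $y_0\ne x_0$, equation \eqref{3} gives $E_1\subseteq f^{-1}(y_0)$, so $\#E_1<\aleph_\alpha$; then since $x_0\notin E_2$ and $E_2=g^{n-2}(E_1)$ is a $g^{n-2}$-image of $E_1$ we get $\#E_2\le\#E_1<\aleph_\alpha$; finally, by \eqref{2'}, $E_{-1}\subseteq f^{-1}(E_2)=\bigcup_{x\in E_2}f^{-1}(x)$, a union of fewer than $\aleph_\alpha$ sets each of cardinality $<\aleph_\alpha$, so $\#E_{-1}<\aleph_\alpha$. On the other side, $E_0\subseteq A_{-1}$ and $x_0\notin A_{-1}$, so every $x\in E_0$ has $\#f^{-1}(x)<\aleph_\alpha$; since $A_{-2}\subseteq f^{-1}(E_0)=\bigcup_{x\in E_0}f^{-1}(x)$ and $\#A_{-2}\ge\aleph_\alpha$, a single union of fewer than $\aleph_\alpha$ sets of size $<\aleph_\alpha$ cannot reach $\aleph_\alpha$, forcing $\#E_0\ge\aleph_\alpha$. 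Because $E_0\subseteq R(f)\subseteq R(g)$ and $E_{-1}=g^{-1}(E_0)$ surjects onto $E_0$ under $g$, we get $\#E_{-1}\ge\#E_0\ge\aleph_\alpha$, contradicting $\#E_{-1}<\aleph_\alpha$.

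The one genuine step that must be isolated and justified, rather than merely transcribed, is the cardinal-arithmetic fact underlying both directions: \emph{a union of strictly fewer than $\aleph_\alpha$ sets, each of cardinality strictly less than $\aleph_\alpha$, again has cardinality strictly less than $\aleph_\alpha$}. This is precisely the statement that every $\aleph_\alpha$ is a regular cardinal, which is false in general (for instance $\aleph_\omega$ is singular, being the supremum of the countably many smaller cardinals $\aleph_n$). I expect this to be the main obstacle, and I would handle it by invoking the fact that under the axiom of choice every \emph{successor} cardinal is regular, while re-examining whether the theorem as stated is meant for all $\aleph_\alpha$ or only the successor ones; the natural correct formulation asserts the conclusion when $\aleph_\alpha$ is regular. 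For a successor cardinal $\aleph_{\beta+1}$, ``$<\aleph_{\beta+1}$'' means ``$\le\aleph_\beta$'', and a union of at most $\aleph_\beta$ sets each of size at most $\aleph_\beta$ has size at most $\aleph_\beta\cdot\aleph_\beta=\aleph_\beta<\aleph_{\beta+1}$, which is exactly the closure property the argument needs; cases \textbf{(C2)} and \textbf{(C3)} are the instances $\aleph_\alpha=\aleph_0$ and $\aleph_\alpha=\aleph_1$. Once the regularity hypothesis is in place, the rest of the proof is a routine replay of the estimates above, so the substance of the write-up is pinning down the correct cardinal hypothesis and citing regularity of successor cardinals under choice.
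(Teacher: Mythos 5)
Your diagram chase is exactly the argument the paper intends: its own proof of Theorem \ref{Tool-cardinal} is a one-line reference to the proof of \textbf{(C2)} of Theorem \ref{Tool}, invoking ``the result that a union of a collection of cardinality $\aleph_\alpha$ of sets of cardinality $\aleph_\alpha$ has cardinality $\aleph_\alpha$.'' So in approach you and the paper coincide; the difference is that you pinned down which cardinal-arithmetic fact the argument actually consumes, and there you have caught a genuine problem with the paper's statement, not a quibble. The lemma the paper cites (essentially $\aleph_\alpha\cdot\aleph_\alpha=\aleph_\alpha$) is true for every aleph under choice, but it is not what the two-sided estimate needs: both halves require that a union of \emph{strictly fewer than} $\aleph_\alpha$ sets, each of cardinality \emph{strictly less than} $\aleph_\alpha$, again has cardinality strictly less than $\aleph_\alpha$, and this is precisely regularity of $\aleph_\alpha$.

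Moreover, the theorem as stated is actually false for singular $\aleph_\alpha$, so your proposed restriction to regular cardinals is not merely sufficient but necessary. Take $\aleph_\alpha=\aleph_\omega$. Let $B_1,B_2,\ldots$ be disjoint sets with $\#B_n=\aleph_n$, let $X_1=\bigl(\bigcup_n B_n\bigr)\cup\{a_1,a_2,\ldots\}\cup\{x_0,z\}$ with all listed points distinct, and define $f_1:X_1\to X_1$ by $f_1(b)=a_n$ for $b\in B_n$, $f_1(a_n)=x_0$, $f_1(x_0)=z$, $f_1(z)=z$. Let $X_2$ be a disjoint copy of $X_1$ via a bijection $\phi:X_1\to X_2$, put $X=X_1\cup X_2$, and let $f$ equal $f_1$ on $X_1$ and $\phi\circ f_1\circ\phi^{-1}$ on $X_2$. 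Then $f(x_0)=z\ne x_0$, $f^{-2}(x_0)=\bigcup_n B_n$ has cardinality $\aleph_\omega$, and every fiber $f^{-1}(x)$ with $x\ne x_0$ has cardinality $\aleph_n$ for some $n$, or $\aleph_0$, or at most $2$, hence less than $\aleph_\omega$; yet $g:X\to X$ defined by $g(x)=\phi(x)$ on $X_1$ and $g(y)=f_1(\phi^{-1}(y))$ on $X_2$ satisfies $g^2=f$. The same construction, run along a cofinal sequence, refutes the statement for every singular $\aleph_\alpha$. With the hypothesis ``$\aleph_\alpha$ regular'' added (under choice this covers all successor cardinals, hence \textbf{(C2)} and \textbf{(C3)} as the cases $\aleph_0$ and $\aleph_1$), your write-up is a complete and correct proof.
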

\begin{proof}
It is similar to that of  {\bf (C2)} of Theorem \ref{Tool}, using the result that a union of a collection of cardinality $\aleph_\alpha$ of sets of cardinality $\aleph_\alpha$ has cardinality $\aleph_\alpha$. 
\end{proof}

   However, the natural analogue of Theorem \ref{Tool} is no longer true in the context of measures. More precisely, the implication of Theorem \ref{Tool} is no longer true when $f$ is a measurable self-map on a measure space $X$ with $f(x_0)\ne x_0$ for some $x_0\in X$ such that $f^{-2}(x_0)$ has positive measure and $f^{-1}(x)$ has measure zero for all $x\ne x_0$. This can be seen from the following example. 
\begin{example}\label{Ex4}
	{\rm 
	
Let $C_1, C_2, C_3$ and $\hat{C}$ be  Cantor spaces (i.e., topological spaces homeomorphic to the Cantor ternary set (\cite[p.14]{Kechris}) or equivalently those that  are non-empty, perfect, compact,  metrizable, and zero-dimensional (\cite{Brouwer}, \cite[p.35]{Kechris})) in four disjoint compact intervals of $\mathbb{R}$, where $C_1, C_2$ and $C_3$ have  (Lebesgue) measure zero and $\hat{C}$ has positive measure (consider $\hat{C}$ to be an $\epsilon$-Cantor set with $0<\epsilon<1$ (see \cite[pp.140-141]{A-B-1998}) for example). 
	Let $x_0, x_1,x_2$ and $x_3$ be distinct real numbers not in $\hat{C}\cup C_1\cup C_2\cup C_3$. Consider the (Lebesgue) measurable map $g:X\to X$ defined by the sequence 
	\begin{eqnarray*}
		\hat{C}\xrightarrow{\phi_1}C_1\xrightarrow{\phi_2}C_2\xrightarrow{\phi_3}C_3\to \{x_0\}\to \{x_1\}\to \{x_2\}\to \{x_3\}\to \{x_0\},
	\end{eqnarray*}
	where $X:=\hat{C}\cup C_1\cup C_2\cup C_3\cup\{x_0, x_1, x_2,x_3\}$, and $\phi_1:\hat{C}\to C_1$, $\phi_2:C_1\to C_2$ and $\phi_3:C_2\to C_3$ are homeomorphisms. Then the map $f:=g^2$, which is given by the sequences
	\begin{eqnarray*}
		\hat{C}\xrightarrow{\phi_2\circ\phi_1}C_2\to \{x_0\}\to  \{x_2\}\to \{x_0\}\quad \text{and}\quad 	C_1\xrightarrow{\phi_3\circ\phi_2}C_3 \to \{x_1\}\to  \{x_3\}\to \{x_1\},
	\end{eqnarray*}
	is  measurable on $X$.
Moreover,  $f(x_0)\ne x_0$, $f^{-2}(x_0)=\hat{C}$ has positive measure and $f^{-1}(x)$ has measure zero for all $x\ne x_0$ in $X$. It is worth noting that $f$ maps measure zero sets to measure zero sets. 
Thus, this example in particular shows that the natural analogue of Theorem \ref{Tool} for measurable functions, where sets of positive measure are considered large compared to sets of  measure zero, fails even for functions which send sets of measure zero to sets of measure zero.
	}
\end{example}

\bibliographystyle{amsplain}

\end{document}